\documentclass[11pt,a4paper]{article}

\usepackage[british]{babel}
\usepackage[utf8]{inputenc}

\usepackage{tabularx}
\usepackage{caption}

\usepackage{amssymb,amsmath,amsthm,thmtools}
	\numberwithin{equation}{section}
\usepackage{mathtools}	
\usepackage{accents}
\usepackage{multirow}
\usepackage{graphicx}
\usepackage{amscd}
\usepackage{epic, eepic}
\usepackage{url}
\usepackage{xcolor}
\usepackage{todonotes}
\usepackage{enumerate}
\usepackage{enumitem}
\usepackage{comment}
\usepackage{hhline}
\usepackage{dsfont} 
\usepackage[hidelinks]{hyperref}
    \addto\extrasbritish{%
    }

\usepackage{tikz-cd}

\newcommand{\PG}{\mathrm{PG}}
\newcommand{\AG}{\mathrm{AG}}

\newcommand{\ff}{\mathbb{F}}

\newcommand{\zz}{\mathbb{Z}}

 \makeatletter
 \@addtoreset{equation}{section}
 \makeatother

 \makeatletter
 \@addtoreset{table}{section}
 \makeatother
 
\declaretheorem[style=plain,name=Theorem,numberwithin=section]{theorem}

\declaretheorem[style=plain,name=Lemma,sibling=theorem]{lemma}

\declaretheorem[style=plain,name=Corollary,sibling=theorem]{corollary}

\declaretheorem[style=plain,name=Observation,sibling=theorem]{observation}

\declaretheorem[style=definition,name=Definition,sibling=theorem]{definition}

    \makeatletter
    \providecommand*{\cupdot}{%
      \mathbin{%
        \mathpalette\@cupdot{}%
      }%
    }
    \newcommand*{\@cupdot}[2]{%
      \ooalign{%
        $\m@th#1\cup$\cr
        \sbox0{$#1\cup$}%
        \dimen@=\ht0 %
        \sbox0{$\m@th#1\cdot$}%
        \advance\dimen@ by -\ht0 %
        \dimen@=.5\dimen@
        \hidewidth\raise\dimen@\box0\hidewidth
      }%
    }
    
    \providecommand*{\bigcupdot}{%
      \mathop{%
        \vphantom{\bigcup}%
        \mathpalette\@bigcupdot{}%
      }%
    }
    \newcommand*{\@bigcupdot}[2]{%
      \ooalign{%
        $\m@th#1\bigcup$\cr
        \sbox0{$#1\bigcup$}%
        \dimen@=\ht0 %
        \advance\dimen@ by -\dp0 %
        \sbox0{\scalebox{2}{$\m@th#1\cdot$}}%
        \advance\dimen@ by -\ht0 %
        \dimen@=.5\dimen@
        \hidewidth\raise\dimen@\box0\hidewidth
      }%
    }
    \makeatother

\usepackage{pgfplots}
\pgfplotsset{compat=1.15}
\usepackage{mathrsfs}
\usetikzlibrary{arrows}

\definecolor{qqqqff}{rgb}{0.,0.,1.}
\definecolor{ffqqqq}{rgb}{1.,0.,0.}
\definecolor{yqyqyq}{rgb}{0.5019607843137255,0.5019607843137255,0.5019607843137255}
\title{A superlinear improvement on line-free sets in $\ff_p^3$}
\author{Benedek Kovács\thanks{Eötvös Loránd University, Budapest, Hungary. The author is supported by the EKÖP-25 University Research Scholarship Program of the Ministry for Culture and Innovation, and grant number ADVANCED 153080, from the source of the National Research, Development and Innovation Fund.
 E-mail: {\tt benoke98@student.elte.hu}}
}
\date{}

\begin{document}
	
	\maketitle
	
	\begin{abstract}
    Building on an earlier result of the author together with Elsholtz, Führer, Füredi, Pach, Simon and Velich \cite{Elsholtz2023}, we present an improved construction for a line-free set in $\ff_p^3$, showing that $r_p(\ff_p^3)\ge (p-1)^3+\frac18 p^{3/2} - O(p)$ as $p\to \infty$. This results in the first superlinear-term improvement over the standard hypercube construction $\{0,1,\ldots,p-2\}^3$. By taking the complement of our set, we also get a new upper bound of $3p^2-\frac18p^{3/2}+O(p)$ on the smallest size of a $2$-blocking set in the affine geometry $\AG(3,p)$.
	\end{abstract}

\section{Introduction}

In this manuscript, we investigate the following extremal combinatorial problem: what is the maximum possible size of a subset $S$ of $\ff_p^3$ which does not contain a full line? Here $\ff_p^3$ is the $3$-dimensional vector space over the $p$-element field for some prime $p$, corresponding to the points of the affine geometry $\mathrm{AG}(3,p)$.

More generally, we denote by $r_k(\ff_p^n)$ the largest size of a subset $S\subseteq \ff_p^n$ that does not contain any nontrivial $k$-term arithmetic progression (that is, one with nonzero difference); note that for $k=p$ this condition is equivalent to saying that $S$ does not contain any full line.

The famous \textit{cap set problem} investigates the value of $r_3(\ff_3^n)$; note that three distinct points $\mathbf{a},\mathbf{b},\mathbf{c}\in \ff_3^n$ form a line if and only if $\mathbf{a}+\mathbf{b}+\mathbf{c}=\mathbf{0}$. By the result of Ellenberg and Gijswijt \cite{Ellenberg2017}, $r_3(\ff_3^n)=o(2.756^n)$. They achieved this upper bound using a polynomial technique introduced by Croot, Lev and Pach \cite{Croot2017}, which was later reformulated by Tao and Sawin \cite{Tao2016_1,Tao2016_2} using the \textit{slice rank} of tensors. By a more detailed analysis of the argument, Jiang \cite{Jiang2021} later improved this upper bound by a factor of $\sqrt{n}$, yielding the best known improvement so far. The best known lower bound is $\Omega(2.2202^n)$ by Romera-Paredes et al. \cite{RomeraParedes2024}.

In the general case when $k=p$, the hypercube construction $\{0,1,\ldots,p-2\}^n$ gives a lower bound of $r_p(\ff_p^n)\ge (p-1)^n$. For $n=1$ this is obviously sharp, and for $n=2$ its sharpness was proven by Jamison \cite{Jamison1977} and Brouwer and Schrijver \cite{Brouwer1978} using the polynomial method. However, for $n=3$, better constructions exist. The author together with Elsholtz, Führer, Füredi, Pach, Simon and Velich \cite{Elsholtz2023} proved the following lower bound by applying a slight modification to the hypercube construction involving $\Theta(p)$ points:

\begin{theorem}[Elsholtz, Führer, Füredi, Kovács, Pach, Simon, Velich \cite{Elsholtz2023}]\label{thm:earlier}
For all primes $p\ge 5$, we have
 $$r_p(\ff_p^3)\ge (p-1)^3+p-2\sqrt{p}=p^3-3p^2+4p-2\sqrt{p}-1.$$
\end{theorem}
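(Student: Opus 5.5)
The plan is to start from the hypercube $H=\{0,1,\ldots,p-2\}^3$ and perturb it so that about $p$ points are gained while only $O(\sqrt p)$ corrections are needed. Write $\mathbb{F}_p^3\setminus H$ as the union of the three planes $x=-1$, $y=-1$, $z=-1$. A line avoids these planes only if it is contained in them, so every line meets $\mathbb{F}_p^3\setminus H$. Rather than fixing the slices globally, I would let the horizontal slices $z=t$, for $t\neq -1$, vary. In each slice I take a two-dimensional line-free set of optimal size $(p-1)^2$, namely the complement of two lines $x=c_t$ and $y=d_t$ in that plane; the hypercube corresponds to $c_t=d_t=-1$. The plane $z=-1$ is kept entirely outside $S$ except for a few points added at the end. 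Horizontal lines are then automatically blocked inside their slice, since each slice is line-free (Jamison, Brouwer--Schrijver). Every non-horizontal line meets $z=-1$ unless it is parallel to that plane, so only those lines lying inside some plane $z=t$ need the slice argument.

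This alone gives $(p-1)^3$. The gain comes from putting points of the plane $z=-1$ back into $S$. A point $(u,v,-1)$ may be added if every line through it that contains no other point of $z=-1$ meets some excluded point. Such a line has the form $\{(u+as,\,v+bs,\,-1+s)\}$ and meets slice $z=t$ at $x=u+a(t+1)$, $y=v+b(t+1)$. So the condition is that for every $(a,b)$ there exists $t\neq-1$ with $u+a(t+1)=c_t$ or $v+b(t+1)=d_t$. If $c_t$ is constant in $t$, the lines with $a=0$ escape, which is why the hypercube allows no additions. I would therefore choose $c_t$ and $d_t$ as non-affine functions of $t$, for instance quadratic polynomials in $t$, so that the graph $\{(t,c_t)\}$ is a parabola-like set meeting almost every line of the $(t,x)$-plane. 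The added points are those $(u,v,-1)$ for which the above condition holds for every pair $(a,b)$. The lines inside $z=-1$ must also stay blocked, so I add only a line-free subset of the plane; in particular I take at most $p-1$ points on each line of $z=-1$, with one natural candidate being points on a single line of $z=-1$, one point omitted.

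The main obstacle is the counting step. One must show that, for a suitable quadratic choice of $c_t,d_t$, the pairs $(a,b)$ for which the condition fails for a candidate added point number $O(\sqrt p)$. These failures correspond to lines missing the parabola over all $t\neq -1$; more precisely, to $(a,b)$ for which both quadratic equations in $t$ have no root in $\mathbb{F}_p\setminus\{-1\}$. Counting them reduces to counting non-residue discriminants jointly, which is a character sum. I would try to arrange the parameters so that this reduces to sums of the form $\sum_t \left(\frac{f(t)}{p}\right)$ and bound them by Weil's estimate, giving an error of at most $2\sqrt p$.

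Each bad line is then repaired either by deleting one point of $S$ on it or by dropping the corresponding added point. This gives net gain at least $p-2\sqrt p$, and hence the bound $(p-1)^3+p-2\sqrt p$. The restriction $p\ge 5$ would be needed to make the quadratic choice nondegenerate and the inequality meaningful.
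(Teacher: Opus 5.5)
There is a genuine gap, and it is in the step you flag as the main obstacle: the count you want is false for \emph{every} choice of $c_t,d_t$, not merely hard to prove. Fix an added point $(u,v,-1)$ and write $s=t+1\neq 0$. The line with direction $(a,b,1)$ is blocked in the slice $z=t$ if and only if $a=(c_t-u)/s$ or $b=(d_t-v)/s$. So the unblocked directions form a product $F_1(u)\times F_2(v)$, where $F_1(u)$ is the set of $a\in\ff_p$ not of the form $(c_t-u)/(t+1)$ with $t\neq -1$, and $F_2(v)$ is defined analogously. The $p-1$ slices can rule out at most $p-1$ of the $p$ slopes, so $|F_1(u)|\ge 1$ and $|F_2(v)|\ge 1$. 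Hence every point you put back into $z=-1$ completes at least one full line.

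The quadratic choice makes this much worse. With $c_t=\alpha s^2+\beta s+\gamma$, $\alpha\neq 0$, the slope $(c_t-u)/s=\alpha s+\beta+(\gamma-u)/s$ takes the same value at $s$ and at $(\gamma-u)/(\alpha s)$, so $|F_1(u)|\approx p/2$ for all $u\neq\gamma$. Geometrically, a conic misses about half of the lines through a point not on it. Your character sum would only show that $p/2+O(1)$ of the discriminants are non-residues, not that $O(\sqrt p)$ of them are. So each added point lies on roughly $p^2/4$ full lines.

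Consequently no repair of the kind you describe can beat $(p-1)^3$. Bad lines through different added points are different, since a non-horizontal line meets $z=-1$ only once, so one repair per bad line costs at least $|A|$. Even an $O(\sqrt p)$ bound per added point would give $O(p^{3/2})$ repairs in total, not $2\sqrt p$.

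The missing idea is that the slice next to the augmented plane must not be the complement of two lines at all. The cited construction is essentially the case $\ell=1$ of the proof of Theorem \ref{thm:mainthm}. There, layer $p-2$ is the complement of three pieces:
\begin{itemize}
\item the main diagonal,
\item the grid $[0,r-1]\times\{0,r,\dots,sr,p-1\}$ of Lemma \ref{lem:blocking} with $t=0$,
\item the $s+1$ points $(p-1,k)$ with $k\in\{0,r,\dots,sr\}$.
\end{itemize}
The $r(s+1)$ points of $[0,r-1]\times\{0,r,\dots,sr\}$ are then put into layer $p-1$.

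The grid part of this near-minimum blocking set blocks, at no extra cost, every line through an added point that varies only in the first coordinate. Every other line through an added point $(p-1,j,k)$ must contain a point with second or third coordinate $p-1$ in some layer $i\le p-2$, and that point is missing. In the standard layers this is automatic. In layer $p-2$ it holds because the points $(p-1,k)$, $(j,p-1)$ and $(p-1,p-1)$ of that layer were removed; the last one covers lines needing both coordinates equal to $p-1$ there.

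Layer $p-1$ gains $r(s+1)$ points, and layer $p-2$ loses $r(s+1)+r+s+1-p$ more than a standard square. The net gain is $p-r-s-1\approx p-2\sqrt p$, obtained by an elementary choice of grid with no character sums.
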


Moreover, we showed a further slight improvement in the case $p\equiv 7\pmod{24}$, as well as a lower bound for general $n$, and refined bounds for specific small values of $p$.

Regarding the upper bound, we also showed an improvement over a previous bound of $r_p(\ff_p^3)\le p^3-2p^2+1$ by Sziklai \cite{Sziklai1997}, using a double counting argument for point pairs on planes: 

\begin{theorem}[Elsholtz, Führer, Füredi, Kovács, Pach, Simon, Velich \cite{Elsholtz2023}]
For all primes $p\ge 3$, we have
  $$r_p(\ff_p^3)\le p^3-2p^2-(\sqrt{2}-1)p+2.$$
\end{theorem}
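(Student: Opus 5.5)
We pass to the complement. Let $S\subseteq\ff_p^3$ be line-free and put $B=\ff_p^3\setminus S$. Then $B$ meets every affine line, so $B\cap\pi$ is a blocking set of the affine plane $\pi\cong\AG(2,p)$ for each of the $N=p(p^2+p+1)$ planes $\pi$. By the Jamison / Brouwer--Schrijver theorem, $x_\pi:=|B\cap\pi|\ge 2p-1$ for every $\pi$. The planes fall into $p^2+p+1$ parallel classes, each partitioning $\ff_p^3$, which already gives $|B|\ge p(2p-1)=2p^2-p$. This is Sziklai's bound, so the task is to gain the extra $\sqrt2\,p-2$ by a second-moment argument. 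Write
\[ |B| = 2p^2-p+t \qquad\text{and}\qquad d_\pi = x_\pi-(2p-1)\ge 0 ; \]
the goal is to show $t\ge \sqrt2\,p-2$.

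First I would record the two exact counting identities. Counting incident (point of $B$, plane) pairs gives $\sum_\pi x_\pi=(p^2+p+1)|B|$. Counting (pair of distinct points of $B$, plane) incidences, using that two distinct points lie on exactly $p+1$ planes, gives $\sum_\pi x_\pi(x_\pi-1)=(p+1)|B|(|B|-1)$. Rewriting in terms of the $d_\pi$, the first identity becomes $D:=\sum_\pi d_\pi=(p^2+p+1)\,t$. The second becomes an exact formula
\[ \sum_\pi d_\pi^2 = Q(t), \]
where $Q$ is a quadratic in $t$ with coefficients polynomial in $p$. It comes from expanding $\sum_\pi x_\pi^2=N(2p-1)^2+2(2p-1)D+\sum_\pi d_\pi^2$ and substituting $|B|=2p^2-p+t$.

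Next I would use Cauchy--Schwarz, $\sum_\pi d_\pi^2\ge D^2/N$. Since $D^2/N = \frac{(p^2+p+1)}{p}\,t^2$ while the $t^2$-coefficient of $Q$ is $p+1$, the $t^2$ terms almost cancel and leave only $t^2/p$. The inequality therefore reduces to a quadratic inequality in $t$ whose leading behaviour is linear in $t$, with terms of order $p^2$. If needed, I would also use the integrality bound $\sum_\pi d_\pi^2\ge\sum_\pi d_\pi$, which holds because each $d_\pi$ is a nonnegative integer. Solving should force $t^2/p$ together with the linear term to dominate a constant of order $p$ times something. This should yield $t\ge\sqrt{2}\,p-O(1)$, and after careful bookkeeping exactly $t\ge \sqrt2 p-2$, i.e.\ $|B|\ge 2p^2+(\sqrt2-1)p-2$. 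Taking complements then gives the stated bound on $r_p(\ff_p^3)$.

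The main obstacle is the exact algebra: computing $Q(t)$ correctly and identifying which lower bound for $\sum_\pi d_\pi^2$ gives the sharp constant $\sqrt2$. If plain Cauchy--Schwarz turns out to lose the constant, I would try first the mixed estimate $\sum_\pi d_\pi(d_\pi-1)\ge 0$. Failing that, I would combine Cauchy--Schwarz within each parallel class separately: each class has $p$ planes with deviations summing to $t$, so its contribution to $\sum_\pi d_\pi^2$ is at least $t^2/p$. The resulting inequality should be of the form $t^2\ge 2p^2-O(p)$ up to lower-order terms, from which $t\ge\sqrt2 p-2$ follows for $p\ge3$.
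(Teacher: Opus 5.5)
Your setup is right and matches the ``double counting of point pairs on planes'' the paper refers to: points and point pairs of $B$ counted over the $p(p^2+p+1)$ planes, with $d_\pi\ge 0$ from Jamison/Brouwer--Schrijver. The gap is that every inequality you propose for closing the argument points the wrong way. Carrying out the algebra gives
\[
\sum_\pi d_\pi^2 = Q(t) = (p+1)t^2+(p^2-4p+2)t+p(2p-1)(p-1)^2 .
\]
The decisive feature is the constant term, of order $2p^4$. When $t$ is small, the first moment $D=(p^2+p+1)t$ is small while the second moment is huge. So a contradiction can only come from an \emph{upper} bound on $\sum_\pi d_\pi^2$ in terms of $t$. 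Cauchy--Schwarz, $\sum_\pi d_\pi^2\ge\sum_\pi d_\pi$, $\sum_\pi d_\pi(d_\pi-1)\ge 0$ and your per-class estimate $\ge t^2/p$ are all \emph{lower} bounds on $\sum_\pi d_\pi^2$ whose right-hand sides vanish at $t=0$. They therefore hold trivially there and can never force $t$ to be large. Concretely, Cauchy--Schwarz yields $t^2/p\le (p^2-4p+2)t+p(2p-1)(p-1)^2$, which is an upper bound on $t$. The hoped-for $t^2\ge 2p^2-O(p)$ does not follow from anything you list.

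The missing ingredient is an upper bound on each individual deviation. In the parallel class of $\pi$, the other $p-1$ planes each contain at least $2p-1$ points of $B$, so $x_\pi\le |B|-(p-1)(2p-1)$, i.e.\ $0\le d_\pi\le t$. Equivalently, within a class the nonnegative $d_\pi$ sum to $t$, so their squares sum to \emph{at most} $t^2$, the reverse of your per-class estimate. Hence $\sum_\pi d_\pi^2\le tD=(p^2+p+1)t^2$, and comparing with $Q(t)$ gives
\[
f(t):=p^2t^2-(p^2-4p+2)t-p(2p-1)(p-1)^2\ge 0 .
\]
Here $f$ is convex, $f(0)<0$, and
\[
f(\sqrt2\,p-2)=(5-5\sqrt2)p^3+(2+4\sqrt2)p^2-(7+2\sqrt2)p+4<0 \quad\text{for } p\ge 3 .
\]
So $f<0$ on all of $[0,\sqrt2\,p-2]$. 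Since $t\ge 0$ (because $D\ge 0$), this forces $t>\sqrt2\,p-2$, i.e.\ $|B|\ge 2p^2+(\sqrt2-1)p-2$. That is exactly the stated bound after taking complements.
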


To the author's knowledge, there have been no further improvements from either side, so the precise coefficient of the $p^2$ term in $r_p(\ff_p^3)$ remains an open question. For further recent work on the value of $r_k(\ff_p^n)$, see \cite{Elsholtz2024,Fuhrer2026}.

In this paper, we refine the earlier line-free construction in \cite{Elsholtz2023}, this time modifying the hypercube $[0,p-2]^3$ in $\Theta(p^{3/2})$ points. We improve on Theorem \ref{thm:earlier} and exhibit the first superlinear improvement in the lower bound on $r_p(\ff_p^3)$ over the hypercube construction:

\begin{theorem}\label{thm:mainthm}
For all primes $p\ge 3$, we have $$r_p(\ff_p^3)\ge p^3-3p^2+\frac18p^{3/2}-\frac{7}{16}p.$$ As a consequence, we have $$r_p(\ff_p^3)\ge (p-1)^3+\frac18p^{3/2}-O(p)$$ as the prime $p$ tends to infinity.
\end{theorem}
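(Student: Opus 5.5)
We will build the set explicitly as a perturbation of the hypercube $H=\{0,1,\ldots,p-2\}^3$, following the strategy of \cite{Elsholtz2023} but with a modification of size $\Theta(p^{3/2})$ instead of $\Theta(p)$. Recall the mechanism. A line of $\ff_p^3$ with direction $\mathbf{d}$ misses $H$ exactly when it meets the ``forbidden layer'' $\{x_i=p-1\}$ for some coordinate $i$ with $d_i\neq 0$. For that reason $H$ is line-free. Moreover, the only lines that miss $H$ in just one point are the axis-parallel ones. A line with two or more nonzero coordinates in its direction meets at least two forbidden layers, hence misses at least two points of $H$. We therefore expect the following trade to gain. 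Add a set $A$ of points from the layers $\{x_i=p-1\}$ (so $A$ lies outside $H$) and delete a smaller set $D\subset H$, such that every line contained in $(H\setminus D)\cup A$ is destroyed.

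Concretely, we will add points in the three ``faces'' $F_i=\{x_i=p-1,\ x_j\in[0,p-2]\ (j\ne i)\}$ and delete points close to the corner region. Each added point $a\in F_i$ creates a potential full line only in direction $e_i$, namely the axis-parallel line through $a$, and in directions $\mathbf{d}$ whose line through $a$ hits no other forbidden layer. The axis-parallel line through $a$ is killed by deleting one point of $H$ on it. The aim is to let one deleted point serve many added points. We will parametrize the added points in $F_3$ by pairs $(x,y)$ in a structured set of size about $c\sqrt p\cdot\sqrt p$. A natural choice is points with $x$ in a short interval of length $\sim\sqrt p$ and $y$ in a union of $\sim\sqrt p$ residues. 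We then delete a set of $H$-points forming a small "grid" of size $\sim\sqrt p\times\sqrt p$ inside one or two planes. Each deleted point lies on the axis line of one added point and also on the non-axis lines through other added points; this is where the $\sqrt p$ scaling from a Sidon/difference-set type condition enters. Symmetrizing over the three faces and optimizing the parameters should give the net gain $|A|-|D|\ge \frac18 p^{3/2}-O(p)$. The constant $\frac18$ should come from optimizing an interval-length parameter (a quadratic such as $t(1/2-t)\cdot\ldots$, maximized at $t=1/4$) together with the rounding errors that produce the $-\frac7{16}p$ term.

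The key steps, in order, are as follows.
\begin{enumerate}
\item Fix the sets $A$ and $D$ with explicit parameters.
\item Classify all lines $\ell$ by the set of coordinates $i$ with $d_i\neq0$.
\item For each class, show that $\ell$ meets the complement of the new set. Either $\ell$ passes through a forbidden-layer point not in $A$ (for instance the lines through points with two coordinates equal to $p-1$, which are never added), or it passes through a point of $D$.
\item Count $|A|-|D|$ exactly, uniformly for all primes $p\ge3$, replacing $\sqrt p$ by $\lfloor\sqrt p\rfloor$ or similar and bounding the floor losses by the linear error term.
\end{enumerate}
The asymptotic statement then follows immediately, since $(p-1)^3=p^3-3p^2+O(p)$.

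The main obstacle is step 3 for lines whose direction has two nonzero coordinates. Such a line can leave $H$ only through two faces $F_i,F_j$, both of which may now contain added points. We must then guarantee that whenever both of its forbidden points lie in $A$, the line hits $D$. This amounts to a combinatorial condition on $A\cap F_i$ and $A\cap F_j$: the "sum/difference" of the projected coordinates must land in the projection of $D$. Making $D$ of size comparable to $p^{3/2}$ with a smaller constant than $|A|$ is exactly the delicate point. I would first try to choose $A\cap F_i$ as products of intervals, for which these sumsets are again intervals and are easy to cover. I would only introduce modular/quadratic structure if the interval choice fails to leave a positive net gain.
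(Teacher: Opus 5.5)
Your proposal is a strategy rather than a proof, and that is a genuine gap. $A$ and $D$ are never specified, so Steps 1, 3 and 4 are all deferred, and the constants $\frac18$ and $-\frac7{16}p$ are guessed rather than derived. The whole content of the theorem is an explicit construction.

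Your heuristic for where the gain comes from is also not the right mechanism. A point of $H$ lies on exactly one line in each axis direction. So a deleted point kills the axis-parallel line of at most one added point per face, and it cannot ``serve many'' added points that way. The correct accounting is by layers:
\begin{itemize}
\item By Jamison and Brouwer--Schrijver, every plane $x=c$ meets a line-free set in at most $(p-1)^2$ points, and $H$ already attains this for every $c\le p-2$.
\item Hence $|S|-|H|$ equals $|S\cap\{x=p-1\}|$ minus the total deficit of the layers $c\le p-2$.
\item Anything you add to $F_2\cup F_3$ inside such a layer must be paid for by deletions in that same layer.
\end{itemize}
So you must place $\Theta(p^{3/2})$ points in the plane $x=p-1$, while the deletions that kill their $e_1$-lines cost almost nothing inside their own layers.

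The idea that achieves this, and which your plan lacks, is Lemma~\ref{lem:blocking}. The complement of $[0,p-2]^2$ is one minimum blocking set of $\ff_p^2$: a line plus one point on each parallel line. The diagonal together with the grid $[t,t+r-1]\times\{0,r,\ldots,sr\}$ (with $r\approx s\approx\sqrt p$, plus at most one extra point) is also a blocking set, of only $2p+O(\sqrt p)$ points. This works because $y-z$ takes $(s+1)r\ge p-1$ consecutive values on the grid, so the grid meets every line parallel to the diagonal.

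The paper uses this in $\ell=\lfloor\frac14\sqrt p\rfloor$ layers with $t=0,r,\ldots,(\ell-1)r$. The grids then tile $[0,\ell r-1]\times\{0,r,\ldots,sr\}$, and these $\approx\frac14p^{3/2}$ points go into layer $p-1$ with every $e_1$-line already broken. The remaining costs are then forced:
\begin{itemize}
\item Each special layer must also exclude $[0,\ell r-1]\times\{p-1\}$ to block lines with constant $y$. This costs $\approx\frac1{16}p^{3/2}$ in total.
\item Lines with all three coordinates varying pass through layer $p-1$, through a $y=p-1$ point and through a $z=p-1$ point in two different special layers. A collinearity count shows that $\ell(\ell-1)(s+1)\approx\frac1{16}p^{3/2}$ deletions suffice.
\end{itemize}
This gives $\frac14-\frac1{16}-\frac1{16}=\frac18$, the maximum of $c-2c^2$ at $c=\frac14$. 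Note that the three-coordinate class, which your plan does not flag, costs as much as the two-coordinate class you single out.

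Your instinct to use products of intervals is not hopeless. Rescaling $z$ by $r^{-1}$ turns the grid into $[t,t+r-1]\times[0,s]$ and the diagonal into the line $y=rz$. What matters, though, is choosing the line so that the block meets each parallel line exactly once while sitting inside a near-minimum blocking set of its layer. That is the missing idea.
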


In finite geometry, a \textit{$d$-blocking set} $B$ of an affine or projective space $T$ is a set of points $B\subseteq T$ that intersects every subspace of $T$ of codimension $d$ in at least one point. Note that $B$ is a $d$-blocking set if and only if its complement $T\setminus B$ does not contain any full subspace of codimension $d$. For $d=n-1$, this means that $T\setminus B$ is line-free. So our main result, Theorem \ref{thm:mainthm}, can be stated in terms of $2$-blocking sets in the affine space $\ff_p^3$:

\begin{definition}
For integers $n,d,q$ such that $n\ge 2$, $~1\le d\le n-1$ and $q$ is a prime power, let $a_d(n,q)$ denote the smallest size of a $d$-blocking set in the affine geometry $\AG(n,q)$.
\end{definition}
\begin{corollary}
For primes $p$, $$a_2(3,p)\le 3p^2-\frac18p^{3/2}+O(p)$$ holds as $p\to \infty$.
\end{corollary}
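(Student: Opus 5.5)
The corollary follows by taking complements, so the plan is to translate Theorem~\ref{thm:mainthm} into the language of blocking sets and do a one-line computation. Throughout, identify the points of $\AG(3,p)$ with $\ff_p^3$, and note that the subspaces of codimension $2$ in $\AG(3,p)$ are exactly the affine lines.

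First, recall the equivalence already stated in the text: a set $B\subseteq \ff_p^3$ is a $2$-blocking set if and only if its complement $\ff_p^3\setminus B$ contains no full line. Indeed, $B$ misses a line $\ell$ exactly when $\ell\subseteq \ff_p^3\setminus B$. Next, take a line-free set $S\subseteq\ff_p^3$ of maximum size $|S|=r_p(\ff_p^3)$. Then $B:=\ff_p^3\setminus S$ meets every line, so it is a $2$-blocking set of $\AG(3,p)$. This gives
$$a_2(3,p)\le |B| = p^3-r_p(\ff_p^3).$$

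Finally, apply the explicit bound of Theorem~\ref{thm:mainthm}, valid for all primes $p\ge 3$:
$$a_2(3,p)\le p^3-\Bigl(p^3-3p^2+\tfrac18p^{3/2}-\tfrac{7}{16}p\Bigr)=3p^2-\tfrac18p^{3/2}+\tfrac{7}{16}p.$$
The last term is $O(p)$, which gives the claimed bound; for $p=2$ the bound holds trivially after adjusting the implied constant.

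There is no genuine obstacle here. All the difficulty lies in the construction behind Theorem~\ref{thm:mainthm}, and the corollary only uses complementation together with the identification of codimension-$2$ subspaces of $\AG(3,p)$ with lines.
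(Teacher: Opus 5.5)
Your proposal is correct and is the same argument as the paper: the complement of a line-free set is a $2$-blocking set of $\AG(3,p)$, so Theorem~\ref{thm:mainthm} gives $a_2(3,p)\le p^3-r_p(\ff_p^3)\le 3p^2-\frac18p^{3/2}+\frac{7}{16}p$. Your remark about $p=2$ is harmless but not needed, since the statement is only asymptotic.
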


\section{Constructing our line-free set}

The goal of this section is to prove Theorem \ref{thm:mainthm}. Throughout the section, $p\ge 3$ is a fixed prime, and we identify $\ff_p$ with the set $\{0, 1, \ldots, p-1\}$ as usual, inheriting the natural ordering from $\zz$. In $\ff_p^n$, a \textit{line} always refers to an affine line, that is, a subset of the form $\{\mathbf{a}+\lambda\mathbf{v}: \lambda\in \ff_p\}$ for some $\mathbf{a},\mathbf{v}\in \ff_p^n$ with $\mathbf{v}\ne\mathbf{0}$. For $m\le n$, the notation $[m,n]$ denotes the set $\{m, m+1, \ldots, n\}$, where $m$ and $n$ are integers or elements of $\ff_p$.

To prove Theorem \ref{thm:mainthm}, we improve upon the construction in \cite{Elsholtz2023} by the author together with Elsholtz, Führer, Füredi, Pach, Simon and Velich (see Theorem \ref{thm:earlier}). Similarly to that construction, the layers of our $3$-dimensional line-free set will be constructed using complements of planar blocking sets. Namely, we will make use of the following observation:

\begin{observation}\label{obs:planarblocking}
Let $L$ be any line in $\ff_p^2$. If $B\subseteq \ff_p^2$ is a set that contains $L$ and meets every line parallel to $L$ in at least one point, then $B$ is a $1$-blocking set in $\ff_p^2$.
\end{observation}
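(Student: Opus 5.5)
We need to show that every line $M$ of $\ff_p^2$ meets $B$. The plan is a case split on the direction of $M$ relative to $L$, where $L$ is the given line contained in $B$.

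\emph{Case 1: $M$ is parallel to $L$.} In the affine plane, "parallel" includes $M=L$.
If $M=L$, then $M\subseteq B$, so $M$ certainly meets $B$.
If $M\ne L$, then $M$ is one of the lines parallel to $L$, and by hypothesis $B$ meets it in at least one point.

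\emph{Case 2: $M$ is not parallel to $L$.} In $\ff_p^2$, two lines with different directions intersect in exactly one point. Indeed, write $L=\{\mathbf{a}+\lambda\mathbf{v}\}$ and $M=\{\mathbf{b}+\mu\mathbf{w}\}$ with $\mathbf{v},\mathbf{w}$ linearly independent. Then $\mathbf{v},\mathbf{w}$ form a basis of $\ff_p^2$, so the equation $\mathbf{a}-\mathbf{b}=\mu\mathbf{w}-\lambda\mathbf{v}$ has a unique solution $(\lambda,\mu)$. Hence $M\cap L\neq\emptyset$, and since $L\subseteq B$, the line $M$ meets $B$.

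These two cases cover every line of $\ff_p^2$, so $B$ is a $1$-blocking set. There is no real obstacle here; the only point needing care is the basic fact that non-parallel lines in the affine plane always intersect, which is the linear-algebra step in Case 2.
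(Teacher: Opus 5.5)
Your proof is correct and is essentially the paper's argument. The paper splits on whether the line meets $L$ (then it meets $B$) or is disjoint from $L$ (then it is parallel, so the hypothesis applies); you split on direction and verify with the basis argument that non-parallel lines intersect, which is the same dichotomy with that affine-plane fact spelled out.
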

\begin{proof}
For any line $L'\subseteq \ff_p^2$, if $L'\cap L\ne \emptyset$ then $L'$ meets $B$ as well. Otherwise, $L'$ is parallel to $L$, hence contains at least one point of $B$ by our assumption.
\end{proof}

Observe that by the result of Jamison \cite{Jamison1977}, and Brouwer and Schrijver \cite{Brouwer1978}, $r_p(\ff_p^2)=(p-1)^2$, or equivalently, $a_1(2,p)=2p-1$. Therefore, a set $B$ in Observation \ref{obs:planarblocking} that meets every line parallel to $L$ in exactly one point, which hence has $2p-1$ points in total, is actually a minimum-size $1$-blocking set in $\ff_p^2$. One specific example is the set $\ff_p^2\setminus [0,p-2]^2$, where we call $[0,p-2]^2$ the \textit{standard square}, a special case of the hypercube construction. 

We use our observation to build the following specific family of planar blocking sets, which will be used to define the special layers of our $3$-dimensional construction:

\begin{lemma}\label{lem:blocking}
Let $r=\left \lfloor \sqrt{p}\right \rfloor$ and $s=\left \lfloor \frac{p-2}{r}\right \rfloor$. Let $0\le t\le p-r$ be any integer. Then in $\ff_p^2$, the union of the main diagonal $\{(x,x): x\in \ff_p\}$ and the grid $[t, t+r-1]\times \{0, r, 2r, \ldots, sr, p-1\}$ is a $1$-blocking set.
\end{lemma}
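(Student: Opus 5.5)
The plan is to apply Observation \ref{obs:planarblocking} with $L$ the main diagonal $\{(x,x): x\in\ff_p\}$. Since $B$ contains $L$, it suffices to show that every line parallel to $L$, other than $L$ itself, meets the grid $G=[t,t+r-1]\times Y$, where $Y=\{0,r,2r,\ldots,sr,p-1\}$. The lines parallel to $L$ are exactly $L_c=\{(x,x+c): x\in\ff_p\}$ for $c\in\ff_p$, and $L_0=L$. So the statement reduces to a one-dimensional covering claim: for every $c\ne 0$ (in fact for every $c$) there exist $x\in[t,t+r-1]$ and $y\in Y$ with $y\equiv x+c \pmod p$. Equivalently, the set $c+[t,t+r-1]$ must meet $Y$, where the addition is taken mod $p$.

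The key steps, in order, are as follows.

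\begin{enumerate}
\item Since $0\le t\le p-r$, the set $[t,t+r-1]$ is a genuine block of $r$ consecutive elements of $\ff_p$. Hence each translate $c+[t,t+r-1]$ is a cyclic interval of $r$ consecutive residues mod $p$.
\item Show that $Y$, viewed cyclically in $\zz/p\zz$, has no gap of $r$ or more consecutive missing residues. That is, consecutive elements of $Y$ in cyclic order differ by at most $r$.
\item Conclude that every cyclic window of length $r$ contains an element of $Y$. This gives the required point of $G$ on each $L_c$, and Observation \ref{obs:planarblocking} finishes the proof.
\end{enumerate}

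For step 2, I will check the consecutive differences directly.
\begin{itemize}
\item Between $kr$ and $(k+1)r$ the difference is exactly $r$.
\item Between $sr$ and $p-1$: by the definition $s=\lfloor (p-2)/r\rfloor$ we have $sr\le p-2<(s+1)r$. This gives $1\le p-1-sr\le r$. In particular $sr$ and $p-1$ are distinct, so the listing of $Y$ makes sense.
\item The cyclic wrap from $p-1$ to $0$ has difference $1$.
\end{itemize}
Note that $r=\lfloor\sqrt p\rfloor\ge 1$, so all of these quantities are well defined.

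The only mildly delicate point is the bound on the last gap, from $sr$ to $p-1$. This is exactly where the choice of $s$ is used, and it is handled by the floor inequality in step 2. Everything else is routine bookkeeping with cyclic intervals.
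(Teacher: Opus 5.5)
Your proof is correct and is essentially the paper's argument: both apply Observation~\ref{obs:planarblocking} with $L$ the main diagonal and reduce to showing that every class $x-y=c$ meets the grid, and both use $s=\lfloor (p-2)/r\rfloor$ precisely to make the last gap (from $sr$ to $p-1$) at most $r$. The paper verifies the covering by ordering the grid points so that $x-y$ drops by one at each step, which covers $(s+1)r\ge p-1$ consecutive values, and it patches the possible missing value with $(t+r-1,p-1)$. You instead fix $c$ and use that $Y$ has cyclic gaps at most $r$; the two are dual phrasings of the same count.
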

\begin{proof}
Let $B$ be the described set. We will apply Observation \ref{obs:planarblocking}, where $L$ is taken to be the main diagonal. It suffices to show that $B$ contains at least one point from each line of the form $x-y=c$, where $c\in \ff_p\setminus \{0\}$.

Let us track the value of $x-y\in \ff_p$ as $(x,y)$ ranges over the points of the subgrid $[t,t+r-1]\times \{0,r,2r,\ldots,sr\}\subseteq B$. Consider the points of the grid in order from bottom to top, and within each row, from right to left. The initial point $(t+r-1,0)$ has $x-y=t+r-1$. Every time we move to the next point in the same row, $x-y$ decreases by one, and every time we move from the leftmost point of a row to the rightmost point of the next row, $x$ increases by $r-1$ while $y$ increases by $r$, making $x-y$ decrease by one as well. The final point $(t,sr)$ has $x-y=t-sr$.

Altogether, we have covered a range of $(r-1)-(-sr)+1=(s+1)r$ consecutive values in $\ff_p$, and $(s+1)r=\left(\left\lfloor\frac{p-2}{r}\right\rfloor+1\right)r>\frac{p-2}{r}\cdot r=p-2$, meaning that $(s+1)r\ge p-1$. If we only have $p-1$ values, then the point $(t+r-1, p-1)\in B$ gives the missing value $x-y=t+r$.
\end{proof}

Note that this type of blocking set already appeared in the construction in \cite{Elsholtz2023}, where in the complement of the line-free set, layer $p-2$ was essentially an instance of the set in Lemma \ref{lem:blocking}. In our new result, we will use this construction in $\left\lfloor \frac14\sqrt{p}\right\rfloor$ layers, varying the value of $t$ so that the intervals $[t, t+r-1]$, and hence the corresponding grid points, are pairwise disjoint.

\begin{proof}[Proof of Theorem \ref{thm:mainthm}]
We may assume that $p\ge 17$, as otherwise one may check that $(p-1)^3\ge p^3-3p^2+\frac18p^{3/2}-\frac{7}{16}p$ holds, and the hypercube construction already proves the claimed bound.

Fix $p$, and set $r=\left \lfloor \sqrt{p}\right \rfloor$, $s=\left \lfloor \frac{p-2}{r}\right \rfloor$, and $\ell=\left\lfloor \frac14\sqrt{p}\right\rfloor$. Note that the integers $r,s,\ell$ satisfy $r\ge 4$ and $s,\ell\ge 1$.

For a set $T\subseteq \ff_p^3$, and a value $i\in \ff_p$, we define the \textit{$i$-th layer} of $T$ as
$$T_i:=\{(y,z)\in \ff_p^2: (i,y,z)\in T\}.$$

Therefore, $T\subseteq \ff_p^3$ can be identified with the ordered $p$-tuple $(T_0, T_1, \ldots, T_{p-1})$, where $T_i\subseteq \ff_p^2$ for all $i$.

We first provide an initial construction $S^*\subseteq \ff_p^3$ which does contain full lines, which we will later slightly modify to obtain our line-free set $S$. The layers of the set $S^*$ are defined as follows:

\begin{itemize}
\item For each $i\in [0, p-\ell-2]$, let the $i$-th layer of $S^*$ be a standard square, that is, $S^*_i:=[0,p-2]^2$.

\item For each $i\in [p-\ell-1, p-2]$, let $S^*_i:=\ff_p^2\setminus A_i$, where
\begin{align*}
A_i &:=\{(y,y): y\in \ff_p\}\\
& \cup \left(\left([t, t+r-1] \cup \{p-1\}\right) \times \{0, r, 2r, \ldots, sr\}\right)\\
& \cup \left([0,\ell r-1]\times \{p-1\}\right),
\end{align*}
where we set $t=(i-(p-\ell-1))r$.

Observe that $A_i$ contains an instance of the set from Lemma \ref{lem:blocking}, and hence is a $1$-blocking set in $(\ff_p^3)_i=\{(i,y,z): y,z\in \ff_p\}$, meaning that there are no lines within $S^*_i$. 

\item Finally, let $S^*_{p-1}:=[0,\ell r-1] \times \{0, r, 2r, \ldots, sr\}$.
\end{itemize}

Now we prove that all lines contained within $S^*$ belong to a specific family, and we demonstrate a set of points whose removal from $S^*$ yields a line-free set.

\begin{itemize}

\item Observe that there are no full lines within a layer. For $i\in [0, p-\ell-2]$, $S^*_i$ is the standard square, which is a line-free set. We already saw that $S^*_i$ contains no lines for $i\in [p-\ell-1, p-2]$ either; finally, $S^*_{p-1}$ is a subset of the standard square because $\ell r-1=\left\lfloor \frac14\sqrt{p}\right\rfloor \cdot \left\lfloor \sqrt{p}\right\rfloor -1\le \frac14p-1<p-1$ and $sr=\left\lfloor \frac{p-2}{r}\right\rfloor \cdot r\le p-2<p-1$.

\item There are no full lines of the form $L_{j,k}=\{(x,j,k): x\in \ff_p\}$ in $S^*$. If there were such a line, then we would have $(j,k)\in S^*_{p-1}$ for some $j=ar+b$ and $k=cr$, where $a\in [0,\ell-1]$, $b\in [0,r-1]$ and $c\in [0,s]$. However, in this case, the point $(j,k)$ also appears in the set $A_i$ for $i=(p-\ell-1)+a \in [p-\ell-1, p-2]$, meaning that $(i,j,k)\not \in S^*$.

\item There are also no full lines in $S^*$ where the $y$-coordinate is constant ($y=j$) but the $x$- and $z$-coordinates vary. Suppose to the contrary that there is such a line $L$ with $(p-1,j,k)\in L$. Then $j\in [0, \ell r-1]$ and $k\le sr<p-1$. Since the $z$-coordinate must take every value once, there exists $i\in [0,p-2]$ such that $(i,j,p-1)\in L$. However there cannot actually be any such $i$, since $S^*_i$ is a standard square for all $0\le i\le p-\ell-2$, and hence does not contain any point with $z$-coordinate $p-1$, and for $i\in [p-\ell-1, p-2]$ we have included all points $(j,p-1)$ with $j\in [0, \ell r-1]$ in $A_i$ and hence excluded them from $S^*_i$.

\item Similarly, there are no full lines in $S^*$ whose $z$-coordinate is constant ($z=k$) but the $x$- and $y$-coordinates vary, because layers $S^*_i$ for $i\in [0,p-\ell-2]\cup \{p-1\}$ do not contain any points $(y,z)$ with $y=p-1$, and for all $i\in [p-\ell-1, p-2]$ we have included all points $(p-1,k)$ with $k\in \{0, r, 2r, \ldots, sr\}$ in $A_i$ and hence excluded them from $S^*_i$.

\item The last case is that we have a full line $L\subseteq S^*$ in which all three coordinates vary. Let the unique point of $L$ in layer $p-1$ be $P_0=(p-1,j,k)$, where $j\in [0, \ell r-1]$ and $k\in\{0, r, 2r, \ldots, sr\}$. Since $j,k\ne p-1$, the line $L$ must contain a point with $y$-coordinate $p-1$ in some layer $i_1\ne p-1$, and a point with $z$-coordinate $p-1$ in some layer $i_2\ne p-1$. Note that these two layers cannot coincide, as the point $(p-1,p-1)$ is excluded from every layer. Also, as layers $0$ to $p-\ell-2$ are standard squares, we have $i_1,i_2\in [p-\ell-1, p-2]$.

Let $P_1=(i_1, p-1, k_1)$ and $P_2=(i_2, j_2, p-1)$ be these two points on $L$. By collinearity of $P_0$, $P_1$ and $P_2$, the $x$- and $z$-coordinates of these three points satisfy the following equation in $\ff_p$:
$$\frac{i_1-i_2}{(p-1)-i_2}=\frac{k_1-(p-1)}{k-(p-1)}.$$

This means that we can express $k_1$ in $\ff_p$ as follows:
$$k_1=(p-1)+\frac{i_1-i_2}{(p-1)-i_2}\cdot (k-(p-1)).$$

For every value of $i_1\in [p-\ell-1, p-2]$, the value of $i_2$ can take at most $\ell-1$ possible values ($i_2\in [p-\ell-1, p-2]\setminus \{i_1\}$), and $k$ can take at most $s+1$ possible values ($k\in\{0, r, 2r, \ldots, sr\}$). Hence for each fixed $i_1$, the number of possible values for $k_1$ is at most $(\ell-1)(s+1)$. If we remove all corresponding points $P_1=(i_1, p-1, k_1)$ from $S^*$, we hence get a line-free set. As there are $\ell$ possible values of $i_1$, this requires deleting at most $\ell(\ell-1)(s+1)$ points.
\end{itemize}

Let $S$ be the line-free set obtained from $S^*$ by deleting these points; then $|S|\ge |S^*|-\ell(\ell-1)(s+1)$. Figure \ref{fig:finalset} shows the points of $S$ in grey, and the points that were excluded already from $S^*$ in red. The blue segments are completely included in $S^*$, but due to the lines of the mentioned form, at most $(\ell-1)(s+1)$ of their points (asymptotically $\frac14p$ as $p\to \infty$) per layer are actually excluded from $S$. Note that as $p\to \infty$, we have $r,s\sim \sqrt{p}$ and $\ell\sim \frac14\sqrt{p}$. To make the construction easier to visualize, the ratio $\frac{\ell r}{p}$ appears to be larger in the figure than its actual value $\approx \frac14$.

\begin{figure}
\centering
\begin{tikzpicture}[line cap=round,line join=round,>=triangle 45,x=1.0cm,y=1.0cm,every node/.style={font=\tiny}]
\clip(-0.9,-1.2) rectangle (16.5,3.1);
\fill[line width=0.pt,color=yqyqyq,fill=yqyqyq,fill opacity=0.4000000059604645] (0.,0.) -- (2.,0.) -- (2.,2.) -- (0.,2.) -- cycle;
\fill[line width=0.pt,color=yqyqyq,fill=yqyqyq,fill opacity=0.4000000059604645] (3.5,0.) -- (5.5,0.) -- (5.5,2.) -- (3.5,2.) -- cycle;
\fill[line width=0.pt,color=yqyqyq,fill=yqyqyq,fill opacity=0.4000000059604645] (7.,0.) -- (9.,0.) -- (9.,2.) -- (7.,2.) -- cycle;
\fill[line width=0.pt,color=yqyqyq,fill=yqyqyq,fill opacity=0.4000000059604645] (10.5,0.) -- (12.5,0.) -- (12.5,2.) -- (10.5,2.) -- cycle;
\fill[line width=0.pt,color=ffqqqq,fill=ffqqqq,fill opacity=0.4000000059604645] (14.,0.) -- (16.,0.) -- (16.,2.) -- (14.,2.) -- cycle;
\draw [line width=2.pt,color=ffqqqq] (0.,2.)-- (2.,2.);
\draw [line width=2.pt,color=ffqqqq] (2.,2.)-- (2.,0.);
\draw [line width=2.pt,color=yqyqyq] (2.,0.)-- (0.,0.);
\draw [line width=2.pt,color=yqyqyq] (0.,0.)-- (0.,2.);
\draw (0,-0.3) node[anchor=center] {$0$};
\draw (2,-0.3) node[anchor=center] {$p-1$};
\draw (-0.05,0) node[anchor=east] {$0$};
\draw (-0.05,2) node[anchor=east] {$p-1$};
\draw (2.7,0) node[anchor=center,font=\Large] {$\cdots$};
\draw (1,-0.9) node[anchor=center,font=\small] {$S_0$};
\draw (4.5,-0.9) node[anchor=center,font=\small] {$S_{p-\ell-2}$};
\draw (8,-0.9) node[anchor=center,font=\small] {$S_{p-\ell-1}$};
\draw (11.5,-0.9) node[anchor=center,font=\small] {$S_{p-2}$};
\draw (15,-0.9) node[anchor=center,font=\small] {$S_{p-1}$};
\draw [line width=2.pt,color=ffqqqq] (3.5,2.)-- (5.5,2.);
\draw [line width=2.pt,color=ffqqqq] (5.5,2.)-- (5.5,0.);
\draw [line width=2.pt,color=yqyqyq] (5.5,0.)-- (3.5,0.);
\draw [line width=2.pt,color=yqyqyq] (3.5,0.)-- (3.5,2.);
\draw (3.5,-0.3) node[anchor=center] {$0$};
\draw (5.5,-0.3) node[anchor=center] {$p-1$};
\draw (3.45,0) node[anchor=east] {$0$};
\draw (3.45,2) node[anchor=east] {$p-1$};
\draw [line width=2.pt,color=qqqqff] (9.,2.)-- (9.,0.);
\draw [line width=2.pt,color=yqyqyq] (7.,0.)-- (7.,2.);
\draw [line width=2.pt,color=ffqqqq] (7.,0.)-- (7.4,0.);
\draw [line width=2.pt,color=yqyqyq] (7.4,0.)-- (9.,0.);
\draw [line width=2.pt,color=ffqqqq] (7.,1.8)-- (7.4,1.8);
\draw [line width=2.pt,color=ffqqqq] (7.,0.8)-- (7.4,0.8);
\draw [line width=2.pt,color=ffqqqq] (7.,0.4)-- (7.4,0.4);
\draw [line width=2.pt,color=ffqqqq] (7.,1.4)-- (7.4,1.4);
\draw [line width=2.pt,color=ffqqqq] (7.,0.)-- (9.,2.);
\draw [line width=2.pt,color=ffqqqq] (7.,2.)-- (8.2,2.);
\draw [line width=2.pt,color=yqyqyq] (8.2,2.)-- (9.,2.);
\draw (7,-0.3) node[anchor=center] {$0$};
\draw (7.4,-0.4) node[anchor=center] {$r-1$};
\draw [line width=0.1pt,color=black] (7.4,-0.31)-- (7.4,-0.1);
\draw (9,-0.3) node[anchor=center] {$p-1$};
\draw (6.95,0) node[anchor=east] {$0$};
\draw (6.95,0.4) node[anchor=east] {$r$};
\draw (6.95,0.8) node[anchor=east] {$2r$};
\draw (6.95,1.4) node[anchor=east] {$(s-1)r$};
\draw (6.95,1.8) node[anchor=east] {$sr$};
\draw (6.95,2) node[anchor=east] {$p-1$};
\draw (9.7,0) node[anchor=center,font=\Large] {$\cdots$};

\draw [line width=2.pt,color=qqqqff] (12.5,2.)-- (12.5,0.);
\draw [line width=2.pt,color=yqyqyq] (10.5,0.)-- (10.5,2.);
\draw [line width=2.pt,color=ffqqqq] (10.5,0.)-- (12.5,2.);
\draw [line width=2.pt,color=ffqqqq] (10.5,2.)-- (11.7,2.);
\draw [line width=2.pt,color=yqyqyq] (11.7,2.)-- (12.5,2.);
\draw [line width=2.pt,color=ffqqqq] (11.3,1.8)-- (11.7,1.8);
\draw [line width=2.pt,color=ffqqqq] (11.3,0.)-- (11.7,0.);
\draw [line width=2.pt,color=ffqqqq] (11.3,0.8)-- (11.7,0.8);
\draw [line width=2.pt,color=ffqqqq] (11.3,0.4)-- (11.7,0.4);
\draw [line width=2.pt,color=ffqqqq] (11.3,1.4)-- (11.7,1.4);
\draw [line width=2.pt,color=yqyqyq] (10.5,0.)-- (11.3,0.);
\draw [line width=2.pt,color=yqyqyq] (11.7,0.)-- (12.5,0.);
\draw (10.5,-0.3) node[anchor=center] {$0$};
\draw (12.5,-0.3) node[anchor=center] {$p-1$};
\draw (10.45,0) node[anchor=east] {$0$};
\draw (10.45,0.4) node[anchor=east] {$r$};
\draw (10.45,0.8) node[anchor=east] {$2r$};
\draw (10.45,1.4) node[anchor=east] {$(s-1)r$};
\draw (10.45,1.8) node[anchor=east] {$sr$};
\draw (10.45,2) node[anchor=east] {$p-1$};
\draw (11.3,-0.4) node[anchor=center] {$\ell r-r$};
\draw (11.7,-0.2) node[anchor=center] {$\ell r-1$};
\draw [line width=0.1pt,color=black] (11.3,-0.31)-- (11.3,-0.1);
\draw [line width=2.pt,color=ffqqqq] (16.,2.)-- (16.,0.);
\draw [line width=2.pt,color=ffqqqq] (14.,0.)-- (14.,2.);
\draw [line width=2.pt,color=ffqqqq] (14.,2.)-- (15.2,2.);
\draw [line width=2.pt,color=ffqqqq] (15.2,2.)-- (16.,2.);
\draw [line width=2.pt,color=ffqqqq] (15.2,0.)-- (16.,0.);
\draw [line width=2.pt,color=yqyqyq] (14.,1.8)-- (15.2,1.8);
\draw [line width=2.pt,color=yqyqyq] (14.,1.4)-- (15.2,1.4);
\draw [line width=2.pt,color=yqyqyq] (14.,0.8)-- (15.2,0.8);
\draw [line width=2.pt,color=yqyqyq] (15.2,0.4)-- (14.,0.4);
\draw [line width=2.pt,color=yqyqyq] (14.,0.)-- (15.2,0.);
\draw (14,-0.3) node[anchor=center] {$0$};
\draw (16,-0.3) node[anchor=center] {$p-1$};
\draw (13.95,0) node[anchor=east] {$0$};
\draw (13.95,0.4) node[anchor=east] {$r$};
\draw (13.95,0.8) node[anchor=east] {$2r$};
\draw (13.95,1.4) node[anchor=east] {$(s-1)r$};
\draw (13.95,1.8) node[anchor=east] {$sr$};
\draw (13.95,2) node[anchor=east] {$p-1$};
\draw (15.2,-0.2) node[anchor=center] {$\ell r-1$};
\begin{scriptsize}
\draw [fill=yqyqyq] (0.,0.) circle (2pt);
\draw [fill=ffqqqq] (2.,0.) circle (2pt);
\draw [fill=ffqqqq] (0.,2.) circle (2pt);
\draw [fill=ffqqqq] (2.,2.) circle (2pt);
\draw [fill=yqyqyq] (3.5,0.) circle (2pt);
\draw [fill=ffqqqq] (5.5,0.) circle (2pt);
\draw [fill=ffqqqq] (3.5,2.) circle (2pt);
\draw [fill=ffqqqq] (5.5,2.) circle (2pt);
\draw [fill=ffqqqq] (7.,0.) circle (2pt);
\draw [fill=ffqqqq] (9.,0.) circle (2pt);
\draw [fill=ffqqqq] (9.,2.) circle (2pt);
\draw [fill=ffqqqq] (7.,2.) circle (2pt);
\draw [fill=ffqqqq] (7.,2.) circle (2pt);
\draw [fill=ffqqqq] (9.,2.) circle (2pt);
\draw [fill=ffqqqq] (9.,2.) circle (2pt);
\draw [fill=ffqqqq] (9.,0.) circle (2pt);
\draw [fill=ffqqqq] (9.,0.) circle (2pt);
\draw [fill=ffqqqq] (7.,0.) circle (2pt);
\draw [fill=ffqqqq] (7.,2.) circle (2pt);
\draw [fill=ffqqqq] (7.4,0.) circle (2pt);
\draw [fill=ffqqqq] (7.,0.4) circle (2pt);
\draw [fill=ffqqqq] (7.4,0.4) circle (2pt);
\draw [fill=ffqqqq] (7.,0.8) circle (2pt);
\draw [fill=ffqqqq] (7.4,0.8) circle (2pt);
\draw [fill=ffqqqq] (7.4,1.8) circle (2pt);
\draw [fill=ffqqqq] (7.,1.8) circle (2pt);
\draw [fill=ffqqqq] (7.,1.4) circle (2pt);
\draw [fill=ffqqqq] (7.4,1.4) circle (2pt);
\draw [fill=ffqqqq] (8.2,2.) circle (2pt);
\draw [fill=ffqqqq] (9.,0.4) circle (2pt);
\draw [fill=ffqqqq] (9.,0.8) circle (2pt);
\draw [fill=ffqqqq] (9.,1.8) circle (2pt);
\draw [fill=ffqqqq] (9.,1.4) circle (2pt);

\draw [fill=ffqqqq] (10.5,0.) circle (2pt);
\draw [fill=ffqqqq] (12.5,0.) circle (2pt);
\draw [fill=ffqqqq] (12.5,2.) circle (2pt);
\draw [fill=ffqqqq] (10.5,2.) circle (2pt);
\draw [fill=ffqqqq] (12.5,2.) circle (2pt);
\draw [fill=ffqqqq] (12.5,0.) circle (2pt);
\draw [fill=yqyqyq] (10.5,0.) circle (2pt);
\draw [fill=ffqqqq] (10.5,2.) circle (2pt);
\draw [fill=ffqqqq] (10.5,0.) circle (2pt);
\draw [fill=ffqqqq] (12.5,0.) circle (2pt);
\draw [fill=ffqqqq] (10.5,0.) circle (2pt);
\draw [fill=ffqqqq] (12.5,2.) circle (2pt);
\draw [fill=ffqqqq] (10.5,2.) circle (2pt);
\draw [fill=ffqqqq] (11.7,2.) circle (2pt);
\draw [fill=ffqqqq] (11.7,2.) circle (2pt);
\draw [fill=ffqqqq] (12.5,2.) circle (2pt);
\draw [fill=ffqqqq] (10.5,2.) circle (2pt);
\draw [fill=ffqqqq] (12.5,2.) circle (2pt);
\draw [fill=ffqqqq] (12.5,0.) circle (2pt);
\draw [fill=ffqqqq] (10.5,0.) circle (2pt);
\draw [fill=ffqqqq] (12.5,0.4) circle (2pt);
\draw [fill=ffqqqq] (12.5,0.8) circle (2pt);
\draw [fill=ffqqqq] (12.5,1.8) circle (2pt);
\draw [fill=ffqqqq] (12.5,1.4) circle (2pt);
\draw [fill=ffqqqq] (11.3,1.8) circle (2pt);
\draw [fill=ffqqqq] (11.7,1.8) circle (2pt);
\draw [fill=ffqqqq] (11.3,0.) circle (2pt);
\draw [fill=ffqqqq] (11.7,0.) circle (2pt);
\draw [fill=ffqqqq] (11.3,0.8) circle (2pt);
\draw [fill=ffqqqq] (11.7,0.8) circle (2pt);
\draw [fill=ffqqqq] (11.3,0.4) circle (2pt);
\draw [fill=ffqqqq] (11.7,0.4) circle (2pt);
\draw [fill=ffqqqq] (11.3,1.4) circle (2pt);
\draw [fill=ffqqqq] (11.7,1.4) circle (2pt);
\draw [fill=yqyqyq] (11.3,0.) circle (2pt);
\draw [fill=ffqqqq] (11.3,0.) circle (2pt);

\draw [fill=yqyqyq] (14.,0.) circle (2pt);
\draw [fill=ffqqqq] (16.,0.) circle (2pt);
\draw [fill=ffqqqq] (16.,2.) circle (2pt);
\draw [fill=ffqqqq] (14.,2.) circle (2pt);
\draw [fill=ffqqqq] (16.,2.) circle (2pt);
\draw [fill=ffqqqq] (16.,0.) circle (2pt);
\draw [fill=yqyqyq] (14.,0.) circle (2pt);
\draw [fill=ffqqqq] (14.,2.) circle (2pt);
\draw [fill=yqyqyq] (14.,0.) circle (2pt);
\draw [fill=ffqqqq] (16.,2.) circle (2pt);
\draw [fill=ffqqqq] (14.,2.) circle (2pt);
\draw [fill=ffqqqq] (15.2,2.) circle (2pt);
\draw [fill=ffqqqq] (15.2,2.) circle (2pt);
\draw [fill=ffqqqq] (16.,2.) circle (2pt);
\draw [fill=yqyqyq] (15.2,1.8) circle (2pt);
\draw [fill=yqyqyq] (15.2,0.8) circle (2pt);
\draw [fill=yqyqyq] (15.2,0.4) circle (2pt);
\draw [fill=yqyqyq] (15.2,1.4) circle (2pt);
\draw [fill=ffqqqq] (14.,0.) circle (2pt);
\draw [fill=yqyqyq] (15.2,0.) circle (2pt);
\draw [fill=ffqqqq] (16.,0.) circle (2pt);
\draw [fill=ffqqqq] (14.,0.) circle (2pt);
\draw [fill=ffqqqq] (16.,0.) circle (2pt);
\draw [fill=ffqqqq] (14.,2.) circle (2pt);
\draw [fill=ffqqqq] (16.,2.) circle (2pt);
\draw [fill=ffqqqq] (16.,0.) circle (2pt);
\draw [fill=yqyqyq] (14.,0.) circle (2pt);
\draw [fill=yqyqyq] (14.,0.4) circle (2pt);
\draw [fill=yqyqyq] (14.,0.8) circle (2pt);
\draw [fill=yqyqyq] (14.,1.4) circle (2pt);
\draw [fill=yqyqyq] (14.,1.8) circle (2pt);
\end{scriptsize}
\end{tikzpicture}
\captionsetup{skip=1pt}
\caption{Layers of the final line-free set $S$. Grey points are included in $S$, whereas red points are excluded already from $S^*$ (and hence also from $S$). The blue points are all included in $S^*$, but at most $(\ell-1)(s+1)$ of them are removed from each layer when obtaining $S$ from $S^*$.}
\label{fig:finalset}
\end{figure}

We now calculate the size of $S^*$, counting its points layer by layer.

\begin{itemize}

\item For $i\in [0,p-\ell-2]$, $S^*_i$ is a standard square, containing $(p-1)^2$ points.

\item For $i\in [p-\ell-1, p-2]$, the set $A_i$ contains $p$ points on the main diagonal, $(s+1)(r+1)$ points in the grid, and $\ell r$ points with second coordinate $p-1$. A point $(j,j)$ on the main diagonal appears in the grid as well if we have $r\mid j$ and $t\le j\le t+r-1$. (Note that we cannot have $j=p-1$, since $t+r-1\le \ell r-1<p-1$.) These two conditions are satisfied for exactly one value $j$, so $|A_i|=p+(s+1)(r+1)+\ell r-1$ and $|S^*_i|=p^2-p-(s+1)(r+1)-\ell r+1$.

\item In layer $p-1$, we have $|S^*_{p-1}|=(s+1)\cdot \ell r$.
\end{itemize}

Altogether, 
$$|S^*|=(p-\ell-1)(p-1)^2+\ell(p^2-p-(s+1)(r+1)-\ell r+1)+(s+1)\ell r,$$

therefore we have
$$|S|\ge |S^*|-\ell(\ell-1)(s+1)\ge (p-\ell-1)(p-1)^2+\ell(p^2-p-(s+1)(r+\ell)-\ell r+1)+(s+1)\ell r.$$

Using $r=\left \lfloor \sqrt{p}\right \rfloor$, $s=\left \lfloor \frac{p-2}{r}\right \rfloor$ and $\ell=\left\lfloor \frac14\sqrt{p}\right\rfloor$, we now put a lower bound on this size. For $p\ge 5$, the parameters can be bounded by
$$\sqrt{p}-1\le r\le \sqrt{p},$$
$$\sqrt{p}-2\le \frac{p-2}{\sqrt{p}}-1\le \left\lfloor \frac{p-2}{r}\right\rfloor = s\le \frac{p-2}{r}\le \frac{p}{\sqrt{p}-1}\le \sqrt{p}+2,$$
$$\frac14\sqrt{p}-1\le \ell\le \frac14\sqrt{p}.$$

So we have
\begin{equation*}
\begin{split}
|S| & \ge (p-\ell-1)(p^2-2p)+\ell\left(p^2-p-(\sqrt{p}+3)\cdot \frac54\sqrt{p}-\frac14p\right)+(\sqrt{p}-1)^2\left(\frac14\sqrt{p}-1\right) \\
& \ge p^3-2p^2-\ell p^2+2\ell p-p^2+2p+\ell\left(p^2-\frac52p-\frac{15}{4}\sqrt{p}\right)+\frac14p^{3/2}-\frac32p\\
& = p^3-3p^2-\frac12\ell p-\frac{15}{4}\ell \sqrt{p}+\frac14p^{3/2}+\frac12p\\
& \ge p^3-3p^2-\frac18p^{3/2}-\frac{15}{16}p+\frac14p^{3/2}+\frac12p=p^3-3p^2+\frac18p^{3/2}-\frac{7}{16}p=(p-1)^3+\frac18p^{3/2}-O(p),
\end{split}
\end{equation*}

that is, our line-free set $S$ indeed has at least the claimed size.
\end{proof}

\end{document}